\documentclass[11pt]{article}
\usepackage[utf8]{inputenc}
\usepackage[T1]{fontenc}
\DeclareUnicodeCharacter{0229}{\k{e}}
\usepackage{lmodern}
\usepackage{subfiles}
\usepackage{enumitem}
\setenumerate{topsep=6pt,ref={\normalfont(\roman*)},label={\normalfont(\roman*)}, itemsep=0pt} 

\usepackage{amsfonts}
\usepackage{amsthm}
\usepackage{amsmath}
\usepackage{amssymb}
\usepackage{amscd}
\usepackage{mathrsfs}
\usepackage{mathtools}
\usepackage{bbm}
\usepackage{esint}

\usepackage[margin=2.7cm]{geometry}
\usepackage{setspace}
\usepackage{indentfirst}
\usepackage{graphicx}
\usepackage{graphics}
\usepackage{lscape}
\usepackage{pgf,tikz}
\usepackage{tikz-cd}
\usepackage{color}
\usepackage{pict2e}
\usepackage{epic}
\usepackage{epstopdf}
\usepackage{titlesec, titlefoot}
\titleformat{\section}[block]{\Large\bfseries\filcenter}{\thesection}{1em}{}
\titleformat{\part}[block]{\LARGE\bfseries\filcenter}{Part \thepart.}{0.5em}{}
\usepackage{commath}
\usepackage{float}
\usepackage{caption}
\usepackage{etoolbox}
\usepackage{combelow}

\usepackage[hidelinks,bookmarksdepth=3]{hyperref}
\hypersetup{bookmarksopen=true} 

\graphicspath{{./Pictures/}}
\allowdisplaybreaks

\expandafter\def\expandafter\normalsize\expandafter{%
\normalsize
\setlength\abovedisplayskip{6pt}
\setlength\belowdisplayskip{6pt}
\setlength\abovedisplayshortskip{6pt}
\setlength\belowdisplayshortskip{6pt}
}

\theoremstyle{plain}

\renewcommand*\thesection{\arabic{section}}
\numberwithin{equation}{section} 

\newtheorem{theorem}{Theorem}[section]
\newtheorem{lemma}[theorem]{Lemma}
\newtheorem*{lemma*}{Lemma}

\newtheorem{corollary}[theorem]{Corollary}

\theoremstyle{definition}

\newtheorem{remark}[theorem]{Remark}

\expandafter\let\expandafter\oldproof\csname\string\proof\endcsname
\let\oldendproof\endproof
\renewenvironment{proof}[1][\proofname]{%
\oldproof[\upshape \bfseries #1]%
}{\oldendproof}

\makeatletter
\def\@makechapterhead#1{%
\vspace*{50\p@}%
{\parindent \z@ \raggedright \normalfont
\interlinepenalty\@M
\Huge\bfseries  \thechapter.\quad #1\par\nobreak
\vskip 40\p@
}}
\makeatother

\renewcommand{\Im}{\operatorname{Im}}

\DeclareMathOperator{\rank}{rank}

\let\textcaron\v
\renewcommand{\v}[1]{\ifmmode\check{#1}\else\textcaron{#1}\fi}
\def \bs{\boldsymbol}

\def \R {\mathbb{R}}
\def \Z {\mathbb{Z}}

\def \H{\mathbb{H}}
\def \D{\textup{D}}

\def \T{\mathbb{T}}
\def \e{\varepsilon}
\def \d{\,\textup{d}}

\def \p{\partial}
\def \mc{\mathcal}
\def \mb{\mathbb}

\def \tp{\textup}

\renewenvironment{thebibliography}[1]{
  \begin{oldthebibliography}{#1}
    \setlength{\itemsep}{0.5pt}
    \setlength{\parskip}{0.5pt}
}{
  \end{oldthebibliography}
}

\begin{document}

	\title{\textbf{Morrey's problem in $\R^{2\times 4}$ and $\R^{3\times 3}_\mathrm{sym}$}}
		
	\author{
  { Andrea Agazzi},\  
  { Giuseppe Bruno},\  
  { Andr\'e Guerra},\  
  { Federico Pasqualotto}
}
				
	\date{}
	
	\maketitle
	
	\begin{abstract}
    We find an explicit rank-one convex non-quasiconvex integrand in $\R^{2\times 4}$: to falsify the quasiconvexity inequality, we exhibit a map $\mb T^4\to \R^2$ with 12 non-zero Fourier modes.
    In fact, this map is obtained from a scalar potential, so we also find a rank one convex integrand in $\R^{4\times 4}_\tp{sym}$ which is not quasiconvex. These examples are obtained by transpositions and restrictions  of Grabovsky's example of a rank-one convex, non-quasiconvex integrand in $\R^{8\times 2}.$ We also modify \v Sver\'ak's example to construct a rank-one convex non-quasiconvex integrand in $\R^{3\times 3}_\tp{sym}$.
    \end{abstract}

\section{Introduction}

Let $F\colon \R^{m\times n}\to \R$ be locally Lipschitz and let $\mb T^n$ be the periodic torus. We say that $F$ is:
\begin{enumerate}
\item\label{it:qcvx} \textit{quasiconvex} if, for all $A\in \R^{m\times n}$ and all $\varphi \in C^\infty(\T^n,\R^m)$, we have
$$\int_{\T^n} [F(A+ \D \varphi) -F(A)] \d x\geq 0;$$
\item\label{it:rcvx} \textit{rank-one convex} if, for all $A,X\in \R^{m\times n}$ with $\rank(X)=1$, $t\mapsto F(A+t X)$ is convex.
\end{enumerate} 
It is well-known that \ref{it:qcvx}$\implies$\ref{it:rcvx}, see e.g.\ \cite{Dacorogna2007}. 
A main open problem, first posed by Morrey \cite{Morrey1952,Morrey1966}, is whether the converse holds. This is trivial if either $m=1$ or $n=1$, so we assume that $m,n\geq 2$. The importance of quasiconvexity in the Calculus of Variations is that it is essentially equivalent to the \textit{lower semicontinuity} \cite{Dacorogna2007} and to \textit{mean coercivity} \cite{Chen2017} of the associated functional.

 It is now known that rank-one convexity does not imply quasiconvexity if either $m\geq 3$ \cite{Sverak1992a} or $m=2$ and $n\geq N_0$, where $N_0\approx 1000$ \cite{Cassese2026}. Note that the latter work  \cite{Cassese2026} contains many examples of rank-one convex non-quasiconvex homogeneous integrands which furthermore have additional symmetries, including also the case $m\geq 4, n=2$. The case $n=m=2$, however, remains open, and there is some evidence that the answer may be different in this case \cite{Astala2022,Astala2012,Bruno2026,Conti2003,Faraco2008,GuerraCosta2020,Harris2018,Kirchheim2008,Muller1999b,Sebestyen2017,Szekelyhidi2005}.
 
\v Sver\'ak's example in \cite{Sverak1992a} consists of an explicit
rank-one convex integrand (in fact a polynomial of degree $4$), together
with a map with three Fourier modes,
\begin{equation}\label{eq:sverak-map}
\varphi_{\v S}(x_1,x_2)
:=\sin x_1\,e_1+\sin x_2\,e_2
+\sin(x_1+x_2)\,e_3,
\end{equation}
where $\{e_1,e_2,e_3\}$ is the canonical basis of $\R^3$. Its gradient
takes values in the linear space
 \begin{equation}
 \label{eq:L}
L:=\left\{
\begin{bmatrix}r&0\\0&s\\t&t\end{bmatrix}:
r,s,t\in\R
\right\},
 \end{equation}
and a simple calculation shows that the integrand
$F_{\v S}\colon L\to \R$ which maps the matrix parametrized by
$(r,s,t)$ to $-rst$ is rank-one convex, yet
\[
\int_{\T^2}
\bigl[F_{\v S}(\D\varphi_{\v S})-F_{\v S}(0)\bigr]
\d x<0.
\]
A somewhat related example had also been previously observed by Tartar \cite{Tartar1979}.
One can then extend $F_{\v S}$ to $\R^{3\times 2}$ while preserving rank-one convexity and non-quasiconvexity.

A further example, with $m=8$ and $n=2$, was constructed by Grabovsky \cite{Grabovsky2018}.
In this note we focus on his example, which is again completely explicit:
$$G\colon \H^2 \times \H^2 \to [0,\infty),\qquad G(\boldsymbol\xi,\boldsymbol\eta):=\sqrt{\|\boldsymbol\xi\|_{\H^2}^2 \|\boldsymbol\eta\|_{\H^2}^2-|\langle \boldsymbol\xi,\boldsymbol\eta\rangle_{\H^2}|^2}.$$
Here and throughout, bold symbols denote quaternions or quaternionic vectors, and $\langle\boldsymbol\xi,\boldsymbol{\eta}\rangle_{\H^2}:=\boldsymbol\xi_1 \boldsymbol{\bar \eta_1} + \boldsymbol{\xi_2}\boldsymbol{\bar \eta_2}$ as usual.
Identifying $\H^{2\times 2}\cong \R^{8\times 2}$ in the natural way, we find an integrand $G\colon \R^{8\times 2}\to [0,\infty)$ which we denote by the same symbol. Grabovsky proved:
\begin{theorem}[\cite{Grabovsky2018}]\label{thm:grab}
The integrand $G$ is 2-homogeneous, $\tp{SO}(2)$-invariant, rank-one convex, but non-quasiconvex at $((\boldsymbol{1},\boldsymbol 0),(\boldsymbol{0},\boldsymbol{1}))$, which corresponds to the identity $\boldsymbol I_2\in \H^{2\times 2}$.
\end{theorem}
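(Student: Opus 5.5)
We would check the four properties in increasing order of difficulty. Throughout, a matrix $X\in\H^{2\times 2}\cong\R^{8\times 2}$ has columns $\boldsymbol\xi,\boldsymbol\eta\in\H^2\cong\R^8$. By quaternionic Cauchy--Schwarz, the expression under the square root is nonnegative, so $G$ is well defined.

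\emph{Homogeneity.} Each of $\|\boldsymbol\xi\|^2\|\boldsymbol\eta\|^2$ and $|\langle\boldsymbol\xi,\boldsymbol\eta\rangle|^2$ scales by $t^4$ under $X\mapsto tX$. Hence $G(tX)=t^2G(X)$ for $t\geq 0$, and also for $t<0$ because $G\geq 0$.

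\emph{$\tp{SO}(2)$-invariance.} Here $R\in\tp{SO}(2)$ acts on the right, $X\mapsto XR$, mixing the two columns with real coefficients. Form the quaternionic Gram matrix
$$M=\begin{bmatrix}\|\boldsymbol\xi\|^2&\langle\boldsymbol\xi,\boldsymbol\eta\rangle\\ \overline{\langle\boldsymbol\xi,\boldsymbol\eta\rangle}&\|\boldsymbol\eta\|^2\end{bmatrix}.$$
Then $G^2=ac-|q|^2$, writing $a,c$ for the diagonal and $q$ for the off-diagonal entry. Under $X\mapsto XR$ we get $M\mapsto R^TMR$, and $R$ has real entries that commute with quaternions. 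A direct expansion shows that this Moore-type determinant is multiplied by $(\det R)^2=1$.

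\emph{Rank-one convexity.} A rank-one direction is $\boldsymbol a\otimes b$ with $\boldsymbol a\in\H^2$ and $b\in\R^2\setminus\{0\}$. Using the $\tp{SO}(2)$-invariance and rescaling $t$, we may assume $b=e_1$. So we must show that $t\mapsto G(\boldsymbol\xi+t\boldsymbol a,\boldsymbol\eta)$ is convex.
\begin{itemize}
\item If $\boldsymbol\eta=0$, the function is identically $0$.
\item Otherwise, let $P$ be the real-linear orthogonal projection of $\H^2\cong\R^8$ onto the orthogonal complement of the quaternionic line $\boldsymbol\eta\H=\{\boldsymbol\eta\boldsymbol q:\boldsymbol q\in\H\}$.
\item The Pythagorean identity $\|\boldsymbol\zeta\|^2\|\boldsymbol\eta\|^2=|\langle\boldsymbol\zeta,\boldsymbol\eta\rangle|^2+\|\boldsymbol\eta\|^2\|P\boldsymbol\zeta\|^2$ then gives $G(\boldsymbol\zeta,\boldsymbol\eta)=\|\boldsymbol\eta\|\,\|P\boldsymbol\zeta\|$.
\item Consequently $t\mapsto\|\boldsymbol\eta\|\,\|P\boldsymbol\xi+tP\boldsymbol a\|$ is the norm of an affine function of $t$, hence convex.
\end{itemize}

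\emph{Non-quasiconvexity at $\boldsymbol I_2$.} This is the main obstacle. We must find $\varphi\in C^\infty(\T^2,\H^2)$ with $\int_{\T^2}G(\boldsymbol I_2+\D\varphi)\d x<G(\boldsymbol I_2)=1$.

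Second-order arguments cannot work. $G$ is smooth near $\boldsymbol I_2$, and for quadratic forms with constant coefficients the Legendre--Hadamard condition (implied by rank-one convexity) already gives $\int D^2G(\boldsymbol I_2)[\D\varphi,\D\varphi]\d x\geq 0$.

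We would therefore look at $f(\e)=\int_{\T^2}G(\boldsymbol I_2+\e\D\varphi)\d x$. The plan is to choose $\varphi$ so that $f''(0)=0$, and then show that the first non-vanishing higher-order term is negative.
\begin{itemize}
\item First, determine the kernel of the second variation. The natural degenerate directions come from the quaternionic symmetries $\boldsymbol\xi\mapsto\boldsymbol\xi\boldsymbol q$ and from rank-one lines along which $P\boldsymbol a=0$, so that $G$ is affine there.
\item Then build $\varphi$ from a few trigonometric modes, in the spirit of \eqref{eq:sverak-map}, so that $\D\varphi$ stays in this degenerate cone, using the noncommutativity of $\H$ to let different modes interact.
\item Next, compute the third- or fourth-order term of $f$ and show it is strictly negative. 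Since the cubic term integrates to zero for symmetric choices of $\varphi$, this is plausibly the quartic term.
\item Finally, take $\e$ small, so that $f(\e)<1$.
\end{itemize}
If the perturbative approach stalls, the fallback is an exact computation: pick $\varphi$ for which $G(\boldsymbol I_2+\D\varphi)$ has a closed form, for instance $\|\boldsymbol\eta\|\,\|P\boldsymbol\xi\|$ with $\boldsymbol\eta$ of constant norm, and evaluate the integral directly.
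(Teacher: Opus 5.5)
\textbf{Verdict: there is a genuine gap.} Non-quasiconvexity at $\boldsymbol I_2$ is the only nontrivial claim in the statement, and it is not proved. Your treatment of homogeneity, $\mathrm{SO}(2)$-invariance and rank-one convexity is essentially fine; the paper simply cites \cite{Grabovsky2018} for these. For the last claim, however, you describe a program (find the kernel of the second variation, pick some modes, hope the first non-vanishing term is negative) but never exhibit $\varphi$ or compute a sign.

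\textbf{The missing idea.} As in Lemma~\ref{lemma:grab}, embed \v Sver\'ak's example into $G$ through the real-linear map $S(r,s,t)=(-s\mathbf j-t\mathbf k,\,r\mathbf i+t\mathbf k)$, and test with $\varphi=S\varphi_{\v S}$, where $\varphi_{\v S}$ is \eqref{eq:sverak-map}. Then $\boldsymbol I_2+\D\varphi$ has columns $\boldsymbol e_1+S(a,0,c)$ and $\boldsymbol e_2+S(0,b,c)$, with $a=\cos x_1$, $b=\cos x_2$, $c=\cos(x_1+x_2)$. A short quaternion computation gives the \emph{exact} identity $G(\boldsymbol I_2+\D\varphi)^2=1-4abc+a^2b^2+a^2c^2+b^2c^2$. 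Since $\fint abc=\fint a^2b^2=\fint a^2c^2=\fint b^2c^2=\frac14$, the average of $G^2$ is $\frac34$. Jensen then gives $\fint_{\T^2}G(\boldsymbol I_2+\D\varphi)\d x\le\frac{\sqrt3}{2}<1$, with no small parameter needed.

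\textbf{Your heuristic points the wrong way.} For $A\in L$ (see \eqref{eq:L}) one has $G(\boldsymbol e_1+SA_1,\boldsymbol e_2+SA_2)^2=1-4rst+r^2s^2+r^2t^2+s^2t^2$. So along $\D\varphi(x)$ the second variation of $G$ at $\boldsymbol I_2$ vanishes pointwise, not just after integration. The decisive term is the cubic one, $-4rst$, produced by quaternionic noncommutativity. Its average, $-4\fint abc=-1$, is nonzero because of the resonance $(1,0)+(0,1)=(1,1)$ among the modes of $\varphi_{\v S}$. A non-vanishing averaged cubic term is exactly what you want, since you can then choose the sign of $\e$. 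Steering towards $\varphi$ whose cubic term integrates to zero and hunting for a negative quartic term therefore discards the mechanism that actually works here. The quartic regime is what the paper meets for $G_{\rm im}^T$ in Theorem~\ref{thm:main}.

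\textbf{A convention slip in your rank-one convexity step.} With $\langle\boldsymbol\xi,\boldsymbol\eta\rangle=\boldsymbol\xi_1\bar{\boldsymbol\eta}_1+\boldsymbol\xi_2\bar{\boldsymbol\eta}_2$, the real components of $\langle\boldsymbol\zeta,\boldsymbol\eta\rangle$ are $(\boldsymbol\zeta,\boldsymbol u\boldsymbol\eta)_{\R^8}$ for $\boldsymbol u\in\{\mathbf 1,\mathbf i,\mathbf j,\mathbf k\}$. So $P$ must project onto the complement of the \emph{left} line $\H\boldsymbol\eta=\{\boldsymbol q\boldsymbol\eta\}$. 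With $\boldsymbol\eta\H$ your Pythagorean identity fails: for $\boldsymbol\eta=(\mathbf 1,\mathbf i)$ and $\boldsymbol\zeta=\boldsymbol\eta\mathbf j=(\mathbf j,\mathbf k)$ one has $\langle\boldsymbol\zeta,\boldsymbol\eta\rangle=0$, hence $G=2$, while your formula gives $0$. After this fix the convexity argument goes through.
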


Our first observation is that, somewhat surprisingly, Grabovsky's
example can be seen as an extension of \v Sver\'ak's example. Consider the real-linear map
\[
S\colon\R^3\longrightarrow\H^2,
\qquad
S(r,s,t):=(-s\mathbf j-t\mathbf k,r\mathbf i+t\mathbf k).
\]
Let $\boldsymbol e_1=(1,0)$ and $\boldsymbol e_2=(0,1)$ be the
standard basis of $\H^2$. For
$A=(A_1\mid A_2)\in\R^{3\times2}$, with $A_1,A_2\in\R^3$, define $F\colon \R^{3\times 2}\to \R$ by
\begin{equation}\label{eq:def-F32}
F(A):=G(\boldsymbol e_1+SA_1,\boldsymbol e_2+SA_2).
\end{equation}
Then, on the space $L$ in \eqref{eq:L},
\[
F\left(\begin{bmatrix}r&0\\0&s\\t&t\end{bmatrix}\right)^2
=1-4rst+r^2s^2+r^2t^2+s^2t^2.
\]
Thus the leading order term behavior of $F$ is very similar to that of $F_{\v S}$, and
in fact $\varphi_{\v S}$ also disproves the quasiconvexity
of $F$, see Lemma \ref{lemma:grab}. We remark that Grabovsky's proof of non-quasiconvexity of $G$ is very indirect, and an explicit map $\varphi$ falsifying the quasiconvexity inequality is never exhibited.

Now let $\Im \H:=\{w_1\mathbf i+w_2\mathbf j+w_3\mathbf k:(w_1,w_2,w_3)\in \R^3\}$ denote the set of purely imaginary quaternions.
We consider the restriction of Grabovsky's integrand to
$\R^{6\times2}$ and its transposition:

\begin{theorem}\label{thm:main}
Let
$G_{\mathrm{im}}
:=G\big|_{(\Im\H)^2\times(\Im\H)^2}
\colon\R^{6\times2}\to[0,\infty)$
and define its transpose by
\[
G_{\mathrm{im}}^T\colon\R^{2\times6}\to[0,\infty),
\qquad A\mapsto G(A^T).
\]
Both $G_{\mathrm{im}}$ and $G_{\mathrm{im}}^T$ are rank-one convex but not quasiconvex.
\end{theorem}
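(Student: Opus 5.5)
Rank-one convexity of both integrands follows directly from Theorem~\ref{thm:grab}. For $G_{\mathrm{im}}$: it is the restriction of $G$ to the linear subspace $(\Im\H)^2\times(\Im\H)^2\subset\H^{2\times 2}\cong\R^{8\times 2}$, and rank-one matrices in $\R^{6\times 2}$ are rank-one in $\R^{8\times 2}$, so convexity along rank-one lines is inherited. For $G^T_{\mathrm{im}}$ we need convexity of $t\mapsto G(A^T+tb\otimes a)$ with $a\in\R^6$, $b\in\R^2$. I would use the quaternionic structure: writing $\bs\xi=b_1\bs a$, $\bs\eta=b_2\bs a$ for a single imaginary quaternion vector $\bs a\in(\Im\H)^2$, such a transposed rank-one direction is $(b_1\bs a,b_2\bs a)$, which is itself a rank-one direction $\bs a\otimes b$ in the real $\R^{8\times 2}$ picture. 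Hence rank-one directions in $\R^{2\times6}$ also correspond to rank-one directions in $\R^{6\times2}$, and both claims reduce to rank-one convexity of $G$. (If the identification is set up so that transposition mixes the roles of $\bs\xi,\bs\eta$ differently, I would instead verify directly that $G$ is convex along lines $(\bs\xi+t b_1\bs a,\bs\eta+tb_2\bs a)$ using the Gram-determinant form $G=\sqrt{\det\mathrm{Gram}}$, i.e.\ $G$ is the area of the real parallelogram spanned by $\bs\xi,\bs\eta$ together with their quaternionic partners.)

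Non-quasiconvexity of $G_{\mathrm{im}}$: the base point $((\bs 1,\bs 0),(\bs 0,\bs 1))$ is not imaginary, so I would pick an imaginary base point equivalent to it. Using that $G$ is invariant under right multiplication of both $\bs\xi,\bs\eta$ by a unit quaternion (since $\langle\bs\xi\bs q,\bs\eta\bs q\rangle=\bs\xi_1\bs q\bar{\bs q}\bar{\bs\eta}_1+\dots$) and under left multiplication of each component, I would map $(\bs e_1,\bs e_2)$ to $(\mathbf i\bs e_1,\mathbf j\bs e_2)$, say, and transport the Šverák map: by Lemma~\ref{lemma:grab}, $\varphi_{\v S}$ composed with $S$ falsifies quasiconvexity at $\bs I_2$, and $S$ already takes values in $(\Im\H)^2$. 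After left-multiplying the first coordinate by $\mathbf i$ etc., I need the perturbation to remain imaginary; I will choose the invariance (componentwise left multiplication by unit quaternions, which preserves $\|\cdot\|$ and $|\langle\cdot,\cdot\rangle|$ when applied consistently) so that both base point and perturbation land in $(\Im\H)^2$. Checking this compatibility explicitly is the first concrete computation.

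Non-quasiconvexity of $G^T_{\mathrm{im}}$ is the main obstacle, since there is no general transfer of quasiconvexity under transposition. Plan: find a scalar potential $u\colon\T^4\to\R$... more concretely, I would seek $\psi\colon\T^6\to\R^2$ (in fact depending only on four variables, as the abstract suggests) whose gradient $\D\psi$, transposed, lies in the same $6\times2$ configurations as $S\,\D\varphi_{\v S}$ up to the invariances, and then compute $\int[G(\bs I+(\D\psi)^T)-G(\bs I)]$ via the explicit quadratic expansion $1-4rst+\dots$. I would try a trigonometric ansatz with few Fourier modes ($\sin$ of sums of coordinates with small amplitude $\varepsilon$), expand $G$ to order $\varepsilon^3$ where the cubic term $-4rst$ gives a nonzero mean via resonance $k_1+k_2=k_3$, ensure the quadratic term averages to zero (null-Lagrangian-like), and conclude negativity for small $\varepsilon$ with appropriate sign choice.
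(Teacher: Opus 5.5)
Your rank-one convexity argument is fine (transposition sends $b\otimes a$ to $a\otimes b$). The non-quasiconvexity of $G_{\mathrm{im}}^T$, which is the real content of the theorem, has a genuine gap: the plan you describe provably cannot work.

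Suppose $(\D\psi)^T$ is required to lie in the \v{S}ver\'ak configuration. In its imaginary form this is $RS(L)$ with $R(\bs q_1,\bs q_2)=(\bs q_1\mathbf i,\bs q_2\mathbf j)$, and the base point is $(R\bs I_2)^T$; note that $\bs I$ itself is not admissible for $G_{\mathrm{im}}^T$. Then
\[
\D\psi=\begin{bmatrix}0&-t&0&-t&0&r\\ 0&-t&s&-t&0&0\end{bmatrix},
\]
and equality of mixed partials forces $r=r(x_6)$, $s=s(x_3)$, $t=t(x_2+x_4)$.

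The reason is structural. In $L$ the three rank-one directions have independent amplitudes $e_1,e_2,e_3$ and dependent frequencies $e_1,e_2,e_1+e_2$, and that dependence is what creates the resonance. Transposition swaps these roles: the frequencies $e_6,e_3,e_2+e_4$ become independent, so $r,s,t$ are independent one-dimensional mean-zero waves. Hence $\fint rst=0$. More strongly, $G$ is convex in each of $r,s,t$ separately, so iterating Jensen's inequality in the independent variables shows the average is never below the value at the base point, at any amplitude. Linear changes of variables or of target do not affect this. This is exactly the transposition asymmetry of \cite{Muller2000}, and no cubic resonance is available.

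What your proposal lacks is a genuinely different mechanism. The paper's mechanism runs as follows.
\begin{enumerate}
\item Write $G_{\mathrm{im}}^T(A)^2$ as an explicit quartic in $u,v,s,t\in\R^3$, and take the base point $M$ with $u=t=-2e_1$, $v=s=0$.
\item Use $\varphi=(\p_{z_1}\psi,-\p_{y_1}\psi)$ for a scalar potential satisfying $\p_{y_2z_1}\psi=\p_{y_1z_2}\psi$. This makes the first-order part of $O=u\otimes t-s\otimes v$ skew, so all odd powers of $\e$ vanish pointwise and $G_{\mathrm{im}}^T(M+\e\D\varphi)^2=(4-\e^2S)^2+\e^4E$.
\item Here $S$ is a sum of $2\times2$ Hessian minors, hence a null Lagrangian; your instinct on that point is right.
\item There is no cubic term. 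The sign is decided at order $\e^4$ by $\fint E$.
\item The natural 9-mode lift of \v{S}ver\'ak's profile ($\tau=0$) is degenerate at that order. One needs the extra modes $\tau\cos(\theta_1-\theta_2)$, which give $\fint E_\tau=96\tau(17\tau-2)<0$ for $\tau=1/12$.
\end{enumerate}

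There is also a smaller slip in the $G_{\mathrm{im}}$ part. Componentwise \emph{left} multiplication $(\bs q_1,\bs q_2)\mapsto(\mathbf i\bs q_1,\mathbf j\bs q_2)$ is not an invariance of $G$. It conjugates $\bs\xi_1\bar{\bs\eta}_1$ and $\bs\xi_2\bar{\bs\eta}_2$ by different rotations, which turns $1-4rst+\dots$ into $1+4rst+\dots$. The right multiplication you also wrote down, applied componentwise as $R$ above, is the correct tool. It preserves $\langle\cdot,\cdot\rangle_{\H^2}$ exactly, acts as a fixed linear map on each column (so it maps gradients to gradients), and sends both $\bs I_2$ and the range of $S$ into $(\Im\H)^2$. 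This is the paper's argument.
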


We emphasize that while rank-one convexity is invariant under
transposition, quasiconvexity in general is not. In fact, the example of
\cite{Sverak1992a} is not invariant under transposition
\cite{Muller2000}, see also \cite[Remark 4.4]{Cassese2026} for another example. The key point in the proof of Theorem \ref{thm:main}, and the second main contribution of this paper, is to identify a suitable map $\varphi\colon \mb T^6\to \R^2$ contradicting quasiconvexity of $G_{\mathrm{im}}^T$. The map we identify consists of 12 modes, which appears to be the smallest possible number for this example, see Remark \ref{rem:intuition}. It turns out that the specific map $\varphi$ we find depends only on 4 (out of 6) variables. Restricting $G_{\rm{im}}^T$ to an appropriate affine subspace determined by the range of $\D \varphi$, we find:

\begin{corollary}\label{thm:F24}
There is an explicit rank-one convex integrand $F_{2,4}\colon \R^{2\times 4}\to [0,\infty)$, obtained by restricting $G_{\rm{im}}^T$ to an appropriate affine subspace, which is not quasiconvex.
\end{corollary}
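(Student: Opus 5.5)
The plan is to obtain Corollary \ref{thm:F24} directly from Theorem \ref{thm:main}, more precisely from its proof for the transposed integrand $G_{\mathrm{im}}^T$. The proof of Theorem \ref{thm:main} produces a matrix $A_0\in\R^{2\times 6}$ (corresponding to the identity $\boldsymbol I_2$ restricted to imaginary parts, up to the choice of base point) and a map $\varphi\colon\T^6\to\R^2$ with 12 Fourier modes such that
\[
\int_{\T^6}\bigl[G_{\mathrm{im}}^T(A_0+\D\varphi)-G_{\mathrm{im}}^T(A_0)\bigr]\d x<0 .
\]
As announced, $\varphi$ depends only on 4 of the 6 variables. After permuting coordinates, assume $\varphi(x)=\psi(x_1,\dots,x_4)$ with $\psi\colon\T^4\to\R^2$. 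Then $\D\varphi=(\D\psi\mid 0)$, where $\D\psi\in\R^{2\times4}$ occupies the first four columns. Write $A_0=(B_0\mid C_0)$ with $B_0\in\R^{2\times 4}$ and $C_0\in\R^{2\times2}$, and define
\[
F_{2,4}(B):=G_{\mathrm{im}}^T\bigl((B\mid C_0)\bigr),\qquad B\in\R^{2\times 4},
\]
which is the restriction of $G_{\mathrm{im}}^T$ to the affine subspace $\{(B\mid C_0)\}\cong\R^{2\times4}$. It is continuous and nonnegative, and it is locally Lipschitz because $G$ is the square root of a nonnegative quartic polynomial; more precisely, $G$ is a norm of the Plücker-type vector $\boldsymbol\xi\wedge\boldsymbol\eta$, hence locally Lipschitz.

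Rank-one convexity of $F_{2,4}$ follows because every rank-one $X\in\R^{2\times4}$ embeds as the rank-one matrix $(X\mid 0)\in\R^{2\times6}$. Thus $t\mapsto F_{2,4}(B+tX)=G_{\mathrm{im}}^T((B\mid C_0)+t(X\mid0))$ is convex by Theorem \ref{thm:main}.

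For non-quasiconvexity, I would integrate over $\T^6=\T^4\times\T^2$ and use Fubini. Since the integrand is independent of $(x_5,x_6)$,
\[
\int_{\T^4}\bigl[F_{2,4}(B_0+\D\psi)-F_{2,4}(B_0)\bigr]\d x'=\frac{1}{|\T^2|}\int_{\T^6}\bigl[G_{\mathrm{im}}^T(A_0+\D\varphi)-G_{\mathrm{im}}^T(A_0)\bigr]\d x<0 .
\]
So $\psi\in C^\infty(\T^4,\R^2)$ falsifies quasiconvexity of $F_{2,4}$ at $B_0$.

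The main obstacle is not this reduction but the input it relies on: an explicit $\varphi$ for $G_{\mathrm{im}}^T$ that depends on only four variables, together with a verified negative sign of the integral. If that has to be established here, I would build $\varphi$ from a scalar potential $u\colon\T^4\to\R$, in line with the symmetric $4\times4$ remark in the abstract, as a combination of $\sin$ and $\cos$ of 12 frequency vectors. The frequencies would be chosen so that, after the transposition and the identification of $(\Im\H)^2$ with $\R^6$, the image of $\D\varphi$ reproduces the cubic term $-4rst$ from the expansion of $F^2$ above. I would then scale $\varphi\mapsto\e\varphi$ and expand $G_{\mathrm{im}}^T(A_0+\e\D\varphi)$ to third order in $\e$. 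The quadratic term integrates to zero by a null-Lagrangian or Fourier-orthogonality argument, so the sign is decided by the integral of the cubic term, which must be negative. Checking this is a finite trigonometric computation, namely summing over frequency triples that add to zero.
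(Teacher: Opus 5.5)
Your reduction is correct and is essentially the paper's proof. There $F_{2,4}(A):=G_{\rm im}^T(M+\iota(A))$, where $\iota$ inserts zero columns in the $y_0,z_0$ slots; this is exactly where $M$ is supported, so your $B_0$ is $0$. Rank-one convexity comes from $\iota$ preserving rank-one matrices, and non-quasiconvexity from $\D\varphi_{1/12}=\iota(\D\widetilde\varphi_{1/12})$ together with the strict inequality already obtained for $G_{\rm im}^T$.

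Your closing sketch of how the input map would be built does not match the paper. There $G_{\rm im}^T(M+\e\D\varphi_\tau)$ contains only even powers of $\e$, so there is no cubic term. The sign is decided at order $\e^4$ by $\fint E_\tau=96\tau(17\tau-2)$, and the lifted \v Sver\'ak choice $\tau=0$ is degenerate. This does not affect the corollary, whose input is supplied by the proof of Theorem \ref{thm:main}.
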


We note that the integrand in Corollary \ref{thm:F24} is no longer 2-homogeneous.
Leveraging the fact that the map $\varphi$ in Theorem \ref{thm:main} and  Corollary \ref{thm:F24} in fact is obtained from a scalar potential we also get:

\begin{corollary}\label{cor:scalar}
There exists an explicit integrand $H\colon \R^{4\times4}_{\rm sym}\to[0,\infty)$  which is convex along symmetric rank-one directions, in the sense that $t\mapsto  H(A+t\,a\otimes a)$ is convex for every $A\in\R^{4\times4}_{\rm sym}$ and $a\in\R^4$, but which is not Hessian-quasiconvex at $0$: there exists $\psi\in C^\infty(\T^4)$ such that \[ \fint_{\T^4} H(\D^2\psi)\d x< H(0). \] 
\end{corollary}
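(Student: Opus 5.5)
The plan is to obtain $H$ by composing $F_{2,4}$ from Corollary \ref{thm:F24} with a linear map that turns a Hessian into a gradient. As announced before the statement, the map $\varphi\colon\T^4\to\R^2$ that falsifies quasiconvexity in Corollary \ref{thm:F24} comes from a scalar potential. Concretely, there is $\psi\in C^\infty(\T^4)$ and a fixed linear map $P\colon\R^4\to\R^2$ with $\varphi=P\nabla\psi$. Then
\[
\D\varphi=P\,\D^2\psi ,
\]
where $P\in\R^{2\times4}$ acts on the symmetric Hessian $\D^2\psi\in\R^{4\times4}_{\rm sym}$.

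Let $A_0\in\R^{2\times4}$ be the matrix at which $F_{2,4}$ fails quasiconvexity. I define
\[
H(M):=F_{2,4}(A_0+PM),\qquad M\in\R^{4\times4}_{\rm sym}.
\]
This is explicit and nonnegative because $F_{2,4}\geq 0$.

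\textbf{Convexity along symmetric rank-one directions.} Take $a\in\R^4$. Then $P(a\otimes a)=(Pa)\otimes a$, which is a matrix of rank at most one. So $t\mapsto H(A+t\,a\otimes a)=F_{2,4}(A_0+PA+t\,(Pa)\otimes a)$ is the restriction of $F_{2,4}$ to a line in a rank-$\leq1$ direction. When $Pa=0$ the function is constant, which is convex. Otherwise the function is convex by rank-one convexity of $F_{2,4}$ from Corollary \ref{thm:F24}.

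\textbf{Failure of Hessian-quasiconvexity.} By construction,
\[
\fint_{\T^4}H(\D^2\psi)\d x=\fint_{\T^4}F_{2,4}(A_0+\D\varphi)\d x<F_{2,4}(A_0)=H(0),
\]
where the strict inequality is exactly the non-quasiconvexity in Corollary \ref{thm:F24}.

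\textbf{Main obstacle.} The only real work is verifying the potential structure. One has to check that the explicit 12-mode map $\varphi$ from Theorem \ref{thm:main}, after restriction to its four active variables, can be written as $P\nabla\psi$. I expect this to hold with $P$ a coordinate projection, or a projection composed with an invertible change of variables. The check is as follows. Write each Fourier mode of $\varphi$ as $c_\xi\sin(\xi\cdot x)$ or $c_\xi\cos(\xi\cdot x)$. The potential form requires each amplitude $c_\xi$ to be proportional to $P\xi$, and the scalar coefficients of the candidate $\psi$ are then read off mode by mode.

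If the projection also involves the affine restriction that defines $F_{2,4}$, two further points need checking. First, $A_0+\D\varphi$ must stay in the affine subspace on which $F_{2,4}$ is defined. Second, $H$ should be set up as an integrand on all of $\R^{4\times4}_{\rm sym}$ and not only on that subspace. Both follow once $P$ is chosen so that its image is the linear part of that subspace. With the explicit Fourier data of $\varphi$ this is a finite linear-algebra verification.
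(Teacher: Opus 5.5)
Your proposal is correct and is essentially the paper's proof. There $H(A):=F_{2,4}(TA)$ with $T=\begin{pmatrix}0&0&1&0\\-1&0&0&0\end{pmatrix}$, so your $A_0$ is $0$ because $F_{2,4}$ already absorbs the base point $M$. Convexity comes from $T(a\otimes a)=(Ta)\otimes a$, and the failure from $\D\varphi_{1/12}=T\D^2\psi_{1/12}$ applied with $\psi=\varepsilon\psi_{1/12}$ for small $\varepsilon\neq0$. The obstacle you anticipate does not arise, because $\varphi_\tau$ is \emph{defined} as $(\partial_{z_1}\psi_\tau,-\partial_{y_1}\psi_\tau)=T\nabla\psi_\tau$ from the outset, so no mode-by-mode check is needed. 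Nor is any affine-subspace bookkeeping needed, since $F_{2,4}$ is defined on all of $\R^{2\times4}$.
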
 

A variant of \v Sver\'ak's construction, now with a map with 4 modes, gives the same conclusion in dimension three:

\begin{theorem}\label{thm:sym3}
There exists a quartic polynomial
$F\colon \R^{3\times3}_{\rm sym}\to\R$
which is convex along every symmetric rank-one direction, but is not
Hessian-quasiconvex at 0: there is
$\Psi\in C^\infty(\T^3)$ with
\[
\fint_{\T^3}F(\D^2\Psi)\,\d x<F(0).
\]
\end{theorem}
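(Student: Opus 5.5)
The plan is to mimic \v Sver\'ak's construction with four Fourier modes in place of three, taking the modes to be the pure second derivatives of cosines. Let $a_1=e_1$, $a_2=e_2$, $a_3=e_3$, $a_4=(1,1,1)$, and set
\[
\Psi(x):=-\sum_{i=1}^4 c\cos(a_i\cdot x),\qquad\text{so that}\qquad \D^2\Psi=\sum_{i=1}^4 c\cos(a_i\cdot x)\,a_i\otimes a_i .
\]
Hence $\D^2\Psi$ takes values in $L:=\operatorname{span}\{a_i\otimes a_i\}$, a $4$-dimensional subspace of $\R^{3\times3}_{\rm sym}$. On $L$ we use coordinates $(r_1,\dots,r_4)$ with $B=\sum_i r_i\,a_i\otimes a_i$.

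First I would determine the symmetric rank-one directions contained in $L$. A matrix in $L$ has all three off-diagonal entries equal (to $r_4$). So if $a\otimes a\in L$, then $a_1a_2=a_1a_3=a_2a_3$. This forces $a$ to be parallel to one of $a_1,\dots,a_4$, since the sign patterns of $(\pm1,\pm1,\pm1)$ other than $\pm(1,1,1)$ produce unequal products. On $L$ define the quartic
\[
f(r):=-r_1r_2r_3r_4 .
\]
It is multilinear, hence affine along each $a_i\otimes a_i$, so $f$ is rank-one convex on $L$. Along $\Psi$ we have $r_i=c\cos(a_i\cdot x)$, and
\[
\fint_{\T^3}\cos x_1\cos x_2\cos x_3\cos(x_1+x_2+x_3)\,\d x=\tfrac18 .
\]
Therefore $\fint f(\D^2\Psi)=-c^4/8<0=f(0)$. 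The choice of four modes is essential here: any product of three of these cosines has zero mean.

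Next I would extend $f$ to all of $\R^{3\times3}_{\rm sym}$. Let $P$ be the orthogonal projection onto $L$ and $Q=I-P$. Define
\[
F(A):=f(PA)+\e\bigl(|A|^2+|A|^4\bigr)+k\bigl(|QA|^2+|A|^2|QA|^2\bigr),
\]
which is a quartic polynomial with $F(0)=0$. Since $Q\,\D^2\Psi=0$, we get
\[
\fint F(\D^2\Psi)=-\frac{c^4}{8}+\e\,O(c^2+c^4).
\]
This is negative once $c=1$ and $\e$ is small, so Hessian-quasiconvexity fails at $0$.

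The main obstacle is showing that, for fixed small $\e$ and $k$ large, $F$ is convex along every $X=a\otimes a$ with $|a|=1$. The second derivative of $f\circ P$ in direction $X$ is $g(PA,a)$, which is quadratic in $PA$ and satisfies $g(B,a_i/|a_i|)=0$ for every $B$. By smoothness and compactness this gives
\[
g(B,a)\ge -C|B|^2\operatorname{dist}\bigl(a,\{\pm a_i/|a_i|\}\bigr).
\]
Near the four directions, this loss is absorbed by the term $4\e|A|^2|X|^2$ coming from $\e|A|^4$, provided the distance is at most $\e/C$. Away from them, $|QX|\ge c_\e>0$ by the classification of rank-one directions in $L$. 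Since the loss is of order $|A|^2$, the quadratic term $k|QA|^2$ alone cannot absorb it, which is why I include the homogenising term $k|A|^2|QA|^2$. Its second derivative is
\[
2|QA|^2|X|^2+2|A|^2|QX|^2+8(A\cdot X)(QA\cdot QX).
\]
The cross term is bounded by $|A|^2|QX|^2+16|QA|^2|X|^2$, and the remainder $-14|QA|^2$ can be absorbed by the separate $k|QA|^2$ term after weighting it suitably, for instance replacing that term by $Kk|QA|^2$ with $K\ge 7$. This leaves at least $k|A|^2c_\e^2$, which dominates $C|A|^2$ for large $k$. If this bookkeeping turns out to be too delicate, a fallback is to make everything homogeneous of degree $4$ and argue by contradiction and compactness on the unit sphere of $(A,a)$, exactly as in \v Sver\'ak's argument.
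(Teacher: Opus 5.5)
\textbf{There is a genuine gap: your extension $F$ is not rank-one convex in the large-$k$ regime your argument needs.}

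Your setup ($L$, $f=-r_1r_2r_3r_4$, $\Psi$, the classification of rank-one lines in $L$, and $\fint f(\D^2\Psi)=-c^4/8$) is the same as the paper's. You also correctly see that, since $f$ is quartic, the loss in $D^2(f\circ P)$ is of order $|A|^2$ and needs a quartic penalty.

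The specific error is this. The second derivative of $Kk|QA|^2$ in the direction $X$ is $2Kk|QX|^2$, which does not depend on $A$. So it cannot absorb the remainder $-14k|QA|^2$ left by the cross term $8k(A\cdot X)(QA\cdot QX)$.

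This is not just a bookkeeping slip: for large $k$ your $F$ fails, whatever $K$ is. Take a unit $a$ with $w:=|Q(a\otimes a)|^2=\frac18$. Such an $a$ exists by continuity, since this quantity is $0$ at $a=e_1$ and $\frac13$ at $a=(e_1-e_2)/\sqrt2$. Put $X=a\otimes a$ and $A=t(QX-PX)$. Then $|A|=t$, $|QA|^2=t^2w$, $A\cdot X=t(2w-1)$ and $QA\cdot QX=tw$, so
\[
D^2\bigl(|A|^2|QA|^2\bigr)[X,X]=2t^2w+2t^2w+8t^2w(2w-1)=-\frac{t^2}{4}.
\]
All other contributions to $D^2F(A)[X,X]$ are at most $\frac{Kk}{4}+2\varepsilon+(C+9\varepsilon)t^2$, with $C$ independent of $k$. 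Hence $D^2F(A)[X,X]<0$ once $k>4(C+9\varepsilon)$ and $t$ is large, which is exactly the regime your argument needs.

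Structurally, write $x=|PA|^2$ and $y=|QA|^2$. Your quartic terms are $\varepsilon x^2+(k+2\varepsilon)xy+(k+\varepsilon)y^2$. The indefinite part $8(PA\cdot PX)(QA\cdot QX)$ of the Hessian of $xy$ is dominated by the $8(PA\cdot PX)^2$ and $8(QA\cdot QX)^2$ parts of the Hessians of $x^2$ and $y^2$ only if $(k+2\varepsilon)^2\le 4\varepsilon(k+\varepsilon)$. That is false for every $k>0$.

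\textbf{The missing ingredient} is a $|QA|^4$ term whose coefficient is large compared with $k^2/\varepsilon$. Add $\mu|QA|^4$ with $\mu\ge k^2/(4\varepsilon)$. Its Hessian contains $8\mu(QA\cdot QX)^2$, and together with the $8\varepsilon(A\cdot X)^2$ from $\varepsilon|A|^4$ this gives $8\varepsilon(A\cdot X)^2+8k(A\cdot X)(QA\cdot QX)+8\mu(QA\cdot QX)^2\ge 0$. Hence
\[
D^2F(A)[X,X]\ge g(PA,a)+4\varepsilon|A|^2+2k|A|^2|QX|^2 .
\]
From here your near/far dichotomy works verbatim, and the $Kk|QA|^2$ term becomes superfluous. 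The value $\fint F(\D^2\Psi)$ is unchanged since $Q\D^2\Psi=0$, and $F$ is still a quartic polynomial.

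This is precisely the role of $\mu|QZ|^4$ in the paper's extension $f(PZ)+\alpha|PZ|^4+\lambda|PZ|^2|QZ|^2+\mu|QZ|^4$. Its penalty part is convex as soon as $\lambda^2\le 4\alpha\mu$.

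Your fallback has the same problem as stated. A contradiction/compactness argument with $k\to\infty$ only forces $QX_k\to0$ if the penalty's Hessian along rank-one directions is nonnegative. That again requires such a dominant $|QA|^4$ term.
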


For further details on quasi- and rank-one convexity on symmetric matrices see \cite{Maso2003,Faraco2003,Sverak1992}. We emphasize that it has been known for a long time \cite[Example 3.5]{Ball1981} that, for vector-valued maps, the analogue of Corollary \ref{cor:scalar} and Theorem \ref{thm:sym3} hold, so the key novelty here is that the maps $\psi,\Psi$ are \textit{scalar}. To the best of our knowledge, prior to this work there was no example showing the difference between quasiconvexity and rank-one convexity in $\R^{n\times n}_\tp{sym}$, regardless of the value of $n$. The case of $n=2$ is particularly outstanding.

Since $G$ and $F_{\v S}$ are so closely related, and the same map $\varphi_{\v S}$ is used to disprove their quasiconvexity, the result of \cite{Muller2000} strongly suggests that these examples cannot lead to a rank-one convex, non-quasiconvex integrand in $\R^{2\times 3}$. Nonetheless, we believe there should be such an example.

\subsection*{Acknowledgements}

 The authors acknowledge Gabriele Cassese for comments on a preliminary version of this manuscript. AA acknowledges the partial support of the Swiss National Science Foundation project grant 2000-1-243054. GB acknowledges support from Thang Luong and Garrett Bingham. AG acknowledges the support of the Royal Society through a Newton International Fellowship. He would also like to thank Y.\ Grabovsky for many interesting discussions about his example. FP acknowledges support from the Bergman Fellowship of the American Mathematical Society.

\subsection*{AI disclosure}
AI, in particular ChatGPT-5.5, ChatGPT-5.6 sol Pro and Gemini / Aletheia2,  played a crucial role in the research underlying this paper. The authors prompted the models to use Grabovsky's example to find further counter-examples in lower dimensions. This process generated, largely autonomously, the core constructions presented in Theorem \ref{thm:main}, concerning $G_{\tp{im}}^T$, and Corollary \ref{thm:F24}. The authors then further used AI to digest these constructions, and found the relationship between Grabovsky's and \v Sver\'ak's examples. This paper has been written completely and checked entirely by the authors, who take full responsibility for its contents.

\section{Grabovsky's and \v Sver\'aks's examples}

In this section, we make a connection between $F_{\v S}$ and $G$. Here and throughout, dashed integrals denote averaged integrals.

\begin{lemma}\label{lemma:grab}
The integrand $F\colon\R^{3\times2}\to[0,\infty)$ defined in
\eqref{eq:def-F32} is rank-one convex and is not quasiconvex at $0$.
More precisely, the three-mode map $\varphi_{\v S}$ defined in
\eqref{eq:sverak-map}
satisfies
\[
\fint_{\T^2}F(\D\varphi_{\v S})\,\d x
=\fint_{\T^2}G\bigl(I_2+\D(S\varphi_{\v S})\bigr)\,\d x
\leq\frac{\sqrt3}{2}<1=F(0)=G(I_2).
\]
\end{lemma}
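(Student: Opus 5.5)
Rank-one convexity of $F$ follows from Theorem \ref{thm:grab}. The map $A\mapsto(\boldsymbol e_1+SA_1,\boldsymbol e_2+SA_2)$ is affine from $\R^{3\times 2}$ into $\H^{2\times 2}\cong\R^{8\times 2}$ and acts columnwise, $A=(A_1\mid A_2)\mapsto \boldsymbol I_2+(SA_1\mid SA_2)$. It therefore has the form $A\mapsto \boldsymbol I_2+\tilde S A$, where $\tilde S\colon\R^3\to\R^8$ is linear and acts on the left. If $X=a\otimes b$ has rank one, then $\tilde S X=(\tilde S a)\otimes b$ has rank at most one. Hence $t\mapsto F(A+tX)=G(\boldsymbol I_2+\tilde SA+t\tilde SX)$ is convex. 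The same observation gives $\D(S\varphi_{\v S})=\tilde S\,\D\varphi_{\v S}$, which is the first equality in the statement. Finally, $F(0)=G(\boldsymbol e_1,\boldsymbol e_2)=\sqrt{1\cdot1-0}=1$.

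For the integral I will use the formula stated before the lemma,
\[
F\left(\begin{bmatrix}r&0\\0&s\\t&t\end{bmatrix}\right)^2=1-4rst+r^2s^2+r^2t^2+s^2t^2.
\]
I will verify it by direct expansion. We have $\boldsymbol\xi=(1-s\mathbf j-t\mathbf k,\ r\mathbf i)$ and $\boldsymbol\eta=(0,\ 1+r\mathbf i+t\mathbf k)$, because $SA_2=S(0,s,t)$ and $SA_1=S(r,0,t)$ with the appropriate entries. Then
\[
\|\boldsymbol\xi\|^2=1+r^2+s^2+t^2,\qquad \|\boldsymbol\eta\|^2=1+r^2+t^2.
\]
The inner product is $\langle\boldsymbol\xi,\boldsymbol\eta\rangle=r\mathbf i(1-r\mathbf i-t\mathbf k)$. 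Its modulus squared is computed from the quaternion norm, and I expect a cross term of the form $-4rst$ after combining. The exact bookkeeping of which entries enter $S A_1$ and $S A_2$ will need care, but the formula is given in the text, so I only need to confirm it.

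Along $\varphi_{\v S}$ we have $r=\cos x_1$, $s=\cos x_2$, $t=\cos(x_1+x_2)$. By Jensen's inequality (concavity of the square root),
\[
\fint F(\D\varphi_{\v S})\le\Big(\fint\big(1-4rst+r^2s^2+r^2t^2+s^2t^2\big)\Big)^{1/2}.
\]
Now I compute the averages on $\T^2$. For the cubic term, write $\cos x_1\cos x_2=\tfrac12[\cos(x_1+x_2)+\cos(x_1-x_2)]$; multiplying by $\cos(x_1+x_2)$ and averaging leaves only $\tfrac12\fint\cos^2(x_1+x_2)=\tfrac14$. Hence $\fint rst=\tfrac14$, and the term $-4rst$ contributes $-1$. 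For the quartic terms, each of $r^2s^2$, $r^2t^2$, $s^2t^2$ has average $\tfrac14$: the first because $x_1,x_2$ are independent, and the others because, for instance, $(x_1,x_1+x_2)$ is a measure-preserving change of variables on $\T^2$. The total is $1-1+\tfrac34=\tfrac34$, so the right-hand side equals $\tfrac{\sqrt3}{2}<1$. This proves non-quasiconvexity at $0$.

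The main obstacle is the algebraic verification of the squared formula, namely getting the quaternion products and signs right so that the cross term really is $-4rst$ with that coefficient. Everything after that is an elementary trigonometric average.
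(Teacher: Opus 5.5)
Your overall route is the paper's, and the surrounding steps are correct. The affine map $A\mapsto \boldsymbol I_2+\tilde S A$ sends rank-one lines to rank-one lines, so rank-one convexity follows. You also correctly get $F(\D\varphi_{\v S})=G(\boldsymbol I_2+\D(S\varphi_{\v S}))$, $F(0)=1$, $\fint rst=\fint r^2s^2=\fint r^2t^2=\fint s^2t^2=\frac14$, and the Jensen bound $\frac{\sqrt3}{2}$.

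\textbf{The gap.} What is missing is the identity $F^2=1-4rst+r^2s^2+r^2t^2+s^2t^2$ on $L$, which is the real content of the lemma. The introduction only announces it, and deriving it is exactly what this proof has to do. So you cannot take it as given and merely ``confirm'' it.

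Your attempted confirmation also fails. The vectors $\boldsymbol\xi=(1-s\mathbf j-t\mathbf k,\,r\mathbf i)$ and $\boldsymbol\eta=(0,\,1+r\mathbf i+t\mathbf k)$ are not $\boldsymbol e_1+S(r,0,t)$ and $\boldsymbol e_2+S(0,s,t)$. With your vectors, $\langle\boldsymbol\xi,\boldsymbol\eta\rangle=r\mathbf i(1-r\mathbf i-t\mathbf k)$, hence $G^2=(1+r^2+t^2)(1+s^2+t^2)\geq 1$ pointwise. There is no $-4rst$ term at all, and the averaged inequality you want would be false.

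\textbf{The fix.} Use the columns correctly:
\begin{itemize}
\item $\boldsymbol\xi=\boldsymbol e_1+S(r,0,t)=(1-t\mathbf k,\,r\mathbf i+t\mathbf k)$ and $\boldsymbol\eta=\boldsymbol e_2+S(0,s,t)=(-s\mathbf j-t\mathbf k,\,1+t\mathbf k)$.
\item Then $|\boldsymbol\xi|^2=1+r^2+2t^2$ and $|\boldsymbol\eta|^2=1+s^2+2t^2$.
\item Using $\mathbf k\mathbf j=-\mathbf i$, $\mathbf i\mathbf k=-\mathbf j$, $\mathbf k^2=-1$, one gets $\langle\boldsymbol\xi,\boldsymbol\eta\rangle=2t^2+(r+st)\mathbf i+(s+rt)\mathbf j+2t\mathbf k$.
\item The term $-4rst$ comes from the two cross terms $2rst$ in $(r+st)^2+(s+rt)^2$.
\item Then $(1+r^2+2t^2)(1+s^2+2t^2)-|\langle\boldsymbol\xi,\boldsymbol\eta\rangle|^2$ reduces exactly to the stated formula.
\end{itemize}
With this computation inserted, your argument coincides with the paper's proof.
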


\begin{proof}
Let $H=a\otimes\zeta\in\R^{3\times2}$ have rank one, so its columns are $H_1=\zeta_1 a, H_2=\zeta_2 a$. Since
\[
(SH_1,SH_2)=(Sa)\otimes\zeta\in\R^{8\times2},
\]
the affine map
\[
A\mapsto
(\boldsymbol e_1+SA_1,\boldsymbol e_2+SA_2)
\]
maps rank-one lines to rank-one lines. Rank-one convexity of $F$
therefore follows from that of $G$. 

For
\begin{equation}
    \label{eq:defA}
    A=\begin{bmatrix}r&0\\0&s\\t&t\end{bmatrix}\in L,
\end{equation}
the two quaternionic vectors in \eqref{eq:def-F32} are
\[
\boldsymbol\xi=(1-t\mathbf k,r\mathbf i+t\mathbf k),
\quad 
\boldsymbol\eta=(-s\mathbf j-t\mathbf k,1+t\mathbf k), 
\]
Consequently, and using 
$\boldsymbol{k}\boldsymbol j=-\boldsymbol{i},\boldsymbol{i}\boldsymbol{k}=-\boldsymbol{j},\boldsymbol{k}^2=-\boldsymbol{1}$,
\begin{align*}
|\boldsymbol\xi|^2&=1+r^2+2t^2,
\qquad |\boldsymbol\eta|^2&=1+s^2+2t^2,\qquad
\langle\boldsymbol\xi,\boldsymbol\eta\rangle_{\H^2}
=2t^2+(r+st)\mathbf i+(s+rt)\mathbf j+2t\mathbf k.
\end{align*}
Substitution in the definition of $G$ gives
\begin{equation}\label{eq:F32-on-L}
F\left(\begin{bmatrix}r&0\\0&s\\t&t\end{bmatrix}\right)^2
=1-4rst+r^2s^2+r^2t^2+s^2t^2.
\end{equation}

We have 
\[
\D\varphi_{\v S}
=\begin{bmatrix}a&0\\0&b\\c&c\end{bmatrix},
\qquad
a:=\cos x_1,\qquad b:=\cos x_2,\qquad
c:=\cos(x_1+x_2),
\]
and hence, by \eqref{eq:F32-on-L},
\[
F(\D\varphi_{\v S})^2
=1-4abc+a^2b^2+a^2c^2+b^2c^2.
\]
We then have
\[
\fint_{\T^2}abc\,\d x=\frac14,
\qquad
\fint_{\T^2}a^2b^2\,\d x
=\fint_{\T^2}a^2c^2\,\d x
=\fint_{\T^2}b^2c^2\,\d x
=\frac14.
\]
It follows that
\[
\fint_{\T^2}F(\D\varphi_{\v S})^2\,\d x=\frac34.
\]
Jensen's inequality therefore yields
\[
\fint_{\T^2}F(\D\varphi_{\v S})\,\d x
\leq
\left(\fint_{\T^2}F(\D\varphi_{\v S})^2\,\d x\right)^{1/2}
=\frac{\sqrt3}{2}<1=F(0).
\]
Finally, linearity of $S$ gives
$
F(\D\varphi_{\v S})
=G\bigl(\boldsymbol I_2+\D(S\varphi_{\v S})\bigr),
$
which proves the remaining claims.
\end{proof}

\section{Proof of Theorem \ref{thm:main} and Corollary \ref{thm:F24}}

\begin{proof}[Proof of Theorem \ref{thm:main}]
It is clear that, being the restriction and transposition of a rank-one convex integrand, both $G_{\rm im}$ and $G_{\rm im}^T$ are rank-one convex.

Let us first show the non-quasiconvexity of $G_{\mathrm{im}}$.
Set
\[ R:\mathbb H^2\to\mathbb H^2, \qquad R(\boldsymbol q_1,\boldsymbol q_2):=(\boldsymbol q_1\boldsymbol i,\boldsymbol q_2\boldsymbol j). \]
Note that $R$ is an isometry: \[ \langle R\boldsymbol\xi,R\boldsymbol\eta\rangle_{\mathbb H^2} = \boldsymbol\xi_1\boldsymbol i\,\overline{\boldsymbol{\eta}_1\boldsymbol i} +\boldsymbol{\xi_2j}\,\overline{\boldsymbol \eta_2 \boldsymbol j} = \langle\boldsymbol \xi,\boldsymbol \eta\rangle_{\mathbb H^2}\quad \implies \quad  G(R\boldsymbol \xi,R\boldsymbol \eta)=G(\boldsymbol \xi,\boldsymbol \eta). \] Using the identities
$\boldsymbol{ji}=-\boldsymbol{k},\boldsymbol{ki}=\boldsymbol j,\boldsymbol{ij}=\boldsymbol k,\boldsymbol{kj}=-\boldsymbol i$,
we compute
\[ RS(r,s,t)=(s\boldsymbol k-t\boldsymbol j,r\boldsymbol k-t\boldsymbol i)\in(\operatorname{Im}\mathbb H)^2. \] Let $\varphi_{\check S}$ be the map from Lemma \ref{lemma:grab} and set $\psi:=RS\varphi_{\check S}:\mathbb T^2\to(\operatorname{Im}\mathbb H)^2.$ Then  
\[ \fint_{\mathbb T^2} G_{\rm im}\bigl(R\boldsymbol I_2+\D\psi\bigr)\d x = \fint_{\mathbb T^2} G\bigl(\boldsymbol I_2+\D(S\varphi_{\check S})\bigr)\d x <G(\boldsymbol I_2) =G_{\rm im}(R\boldsymbol I_2), \] 
where $R$ also acts column-wise on matrices.
Hence $G_{\rm im}$ is not quasiconvex.

To prove the non-quasiconvexity of $G_{\rm im}^T$, let
$A\in\R^{2\times6}$ and write
\[
A=\begin{bmatrix}u&v\\s&t\end{bmatrix},
\qquad u,v,s,t\in\R^3.
\]
For $r=(r_1,r_2,r_3)\in\R^3$, we use the bold symbol
$
\boldsymbol r:=r_1\mathbf i+r_2\mathbf j+r_3\mathbf k\in\Im\H.
$
The identity
\[
\boldsymbol a\boldsymbol b=-a\cdot b+ \boldsymbol{a\times b},
\qquad a,b\in\R^3,
\]
then shows that
$$\langle (\boldsymbol u, \boldsymbol v), (\boldsymbol s,\boldsymbol t)\rangle_{\H^2} = \boldsymbol{u \bar s} + \boldsymbol{v \bar t}=-\boldsymbol{u s}-\boldsymbol{v t}= u\cdot s + v\cdot t -(\boldsymbol u \times \bs s+ \bs v \times \boldsymbol t)$$
and so, using $|a\times b|^2=|a|^2|b|^2-(a\cdot b)^2$ and $(a\times b)\cdot(c\times d)=(a\cdot c)(b\cdot d)-(a\cdot d)(b\cdot c)$, 
\begin{align}
\label{eq:GimT-as-Q}
\begin{split}
    G_{\mathrm{im}}^T(A)^2& =(|\bs u|^2 + |\bs v|^2)(|\bs s|^2+|\bs t|^2)-(u\cdot s +v\cdot t)^2-|\bs u \times \bs s + \bs v \times \bs t|^2\\
    & = |\bs u|^2|\bs t|^2 + |\bs s|^2 |\bs v|^2-2 (u\cdot s)(v\cdot t)-2(u\cdot v)(s\cdot t)+ 2(u\cdot t)(s\cdot v).
     \end{split}
\end{align}

We use coordinates
$(y_0,y_1,y_2,z_0,z_1,z_2)\in \T^6$. We first set
$$\varphi_{\tau}:= \bigl(\partial_{z_1}\psi_{\tau}, -\partial_{y_1}\psi_{\tau}\bigr)$$
for a scalar potential $\psi_\tau\colon \mb T^4\to \R$, depending only on $(y_1,y_2,z_1,z_2)$, to be found. We regard $\varphi_{\tau}\colon \T^6\to \R^2$ as a map which is independent of $y_0$ and $z_0$, so we have
$$
\D \varphi_\tau = 
\begin{bmatrix}
0 & \partial_{y_1z_1}\psi_\tau & \partial_{y_2z_1}\psi_\tau & 0 & \partial_{z_1z_1}\psi_\tau & \partial_{z_1z_2}\psi_\tau \\[2mm] 0 & -\partial_{y_1y_1}\psi_\tau & -\partial_{y_1y_2}\psi_\tau & 0 & -\partial_{y_1z_1}\psi_\tau & -\partial_{y_1z_2}\psi_\tau.
\end{bmatrix}
$$
Crucially, we require $\psi_\tau$ to satisfy
\begin{equation}
\label{eq:keyid}
    \p_{y_2 z_1} \psi_\tau = \p_{y_1 z_2} \psi_\tau.
\end{equation}
This condition, together with our choice of $\varphi_\tau$, is imposed so that the second order in $\varepsilon$ term below vanishes after integration ($S_\tau$ below is a null Lagrangian, cf.\ \eqref{eq:nullLag}). With the matrix
$$
M=\begin{bmatrix}
    -2 & 0 & 0 & 0 & 0 & 0\\
    0 & 0 & 0 &-2 & 0 & 0
\end{bmatrix},
\qquad u=t=-2 e_1, v=s=0,
$$
using \eqref{eq:GimT-as-Q} and $G_{\rm{im}}^T\geq 0$ we have $G_{\mathrm{im}}^T(M)=4$. By \eqref{eq:keyid}, the corresponding vector $u,v,s,t$ for the matrix $M+\e \D \varphi_\tau$ are 
\begin{align*}
    u &= (-2 ,\e \p_{y_1 z_1}\psi_\tau, \e \p_{y_1 z_2}\psi_\tau), 
    & v& = (0,\e \p_{z_1 z_1}\psi_\tau, \e \p_{z_1 z_2}\psi_\tau),\\
s& =(0,-\e \p_{y_1y_1}\psi_\tau,-\e\p_{y_1 y_2}\psi_{\tau}),&
    t& =(-2,-\e \p_{y_1 z_1}\psi_\tau, -\e \p_{y_1 z_2}\psi_\tau).
\end{align*}
After substituting these expressions into \eqref{eq:GimT-as-Q}, a long but elementary calculation gives
\begin{equation}\label{eq:GimT-exact-density}
G_{\mathrm{im}}^T(M+\e\D\varphi_\tau)^2
=(4-\e^2S_\tau )^2+\e^4 E_\tau,
\end{equation}
where
\begin{align}
     S_\tau & := \det \D^2_{(y_1,z_1)}\psi_\tau + \det\!\left(\D_{(y_1,z_1)}\D_{(y_2,z_2)}\psi_\tau\right), \nonumber \\ 
     \begin{split}
     \label{eq:defE}
         E_\tau & :=  \left( \partial_{y_1y_1}\psi_\tau\,\partial_{z_1z_2}\psi_\tau -\partial_{z_1z_1}\psi_\tau\,\partial_{y_1y_2}\psi_\tau \right)^2\\ 
     &\qquad +4\left( \partial_{y_1y_1}\psi_\tau\,\partial_{y_1z_2}\psi_\tau -\partial_{y_1z_1}\psi_\tau\,\partial_{y_1y_2}\psi_\tau \right) \left( \partial_{z_1z_1}\psi_\tau\,\partial_{y_1z_2}\psi_\tau -\partial_{y_1z_1}\psi_\tau\,\partial_{z_1z_2}\psi_\tau \right)\\
     &=: C_1^2 + 4 C_2 C_3.
     \end{split}
\end{align}
Note that, since the determinant is a null Lagrangian, we have
\begin{equation}
\label{eq:nullLag}
\fint_{\T^6}S_\tau \,\d x=0.
\end{equation}

So far we have computed \eqref{eq:GimT-exact-density} for a general map $\varphi_\tau$, assuming only \eqref{eq:keyid}, and  we now choose $\psi_\tau$. For $\tau\in\R$ to be chosen, we set
\begin{align}
\label{eq:choicepsi}
\begin{split}
g_\tau(\theta_1,\theta_2)
&:=\cos\theta_1+\cos\theta_2+\cos(\theta_1+\theta_2)
+\tau\cos(\theta_1-\theta_2),\\
\psi_\tau(y_1,y_2,z_1,z_2) &:=2g_\tau(y_1-y_2,z_1-z_2) -g_\tau(2y_1,2z_1) +2g_\tau(y_1+y_2,z_1+z_2).
\end{split}
\end{align}
One can easily check that \eqref{eq:keyid} holds.
We now write the Fourier expansion of  $\psi_\tau$ in its 12 modes:
$$\psi_\tau(x)=\sum_{k\in \Z^4} c_k \cos(k\cdot x),\qquad 
\p_{x_i x_j} \psi_\tau(x) = -\sum_{k\in \Z^4}c_k k_i k_j \cos(k\cdot x),
\qquad x:=(y_1,y_2,z_1,z_2),$$
where the only non-zero coefficients are
\[ \begin{array}{c|c} k & c_k\\ \hline (1,-1,0,0) & 2\\ (0,0,1,-1) & 2\\ (1,-1,1,-1) & 2\\ (1,-1,-1,1) & 2\tau\end{array} 
\qquad 
\begin{array}{c|c} k & c_k\\ \hline 
(2,0,0,0) & -1\\ (0,0,2,0) & -1 \\
(2,0,2,0) & -1\\ (2,0,-2,0) & -\tau
\end{array} 
\qquad 
\begin{array}{c|c} k & c_k\\ \hline (1,1,0,0) & 2\\ (0,0,1,1) & 2\\ (1,1,1,1) & 2\\ (1,1,-1,-1) & 2\tau \end{array} 
\]
Thus, for the constants in \eqref{eq:defE}, we have
\begin{align*} C_1 &= \sum_{k,\ell\in \Z^4} c_kc_\ell \left( k_1^2\ell_3\ell_4 -k_3^2\ell_1\ell_2 \right) \cos(k\cdot x)\cos(\ell\cdot x),\\ 
C_2 &= \sum_{k,\ell\in \Z^4} c_kc_\ell \left( k_1^2\ell_1\ell_4 -k_1k_3\ell_1\ell_2 \right) \cos(k\cdot x)\cos(\ell\cdot x),\\ 
C_3 &= \sum_{k,\ell\in \Z^4} c_kc_\ell \left( k_3^2\ell_1\ell_4 -k_1k_3\ell_3\ell_4 \right) \cos(k\cdot x)\cos(\ell\cdot x),\end{align*}
and using  $\cos(k\cdot x)\cos(\ell\cdot x) = \frac12\cos((k+\ell)\cdot x) + \frac12\cos((k-\ell)\cdot x),$
we obtain the Fourier expansions of $C_1,C_2,C_3$.
With the help of a computer algebra software, one can then compute
\begin{equation}\label{eq:GimT-averages}
\fint_{\T^6} E_\tau \,\d x=96\tau(17\tau-2).
\end{equation}
A Mathematica notebook symbolically verifying this calculation, as well as the calculation of \eqref{eq:GimT-exact-density}, is available in  \cite{morrey_notebook_github}. The calculation of the average of $E_\tau$ is very difficult to perform by hand, as one has to determine 84 scalar coefficients.

Since $S_\tau, E_\tau$ are bounded,
\eqref{eq:GimT-exact-density} gives
\begin{equation}
    \label{eq:expG}
    G_{\rm im}^T(M+\e \D \varphi_\tau)=4-\e^2 S_\tau+ \frac{\e^4}{8}E_\tau+O(\e^6)
\end{equation}
and so
\eqref{eq:nullLag}-\eqref{eq:GimT-averages} give, after setting $\tau=\frac{1}{12}$,
\[
\lim_{\e\to0}\frac{1}{\e^4}
\left(
\fint_{\T^6}G_{\mathrm{im}}^T(M+\e\D\varphi_{1/12})\,\d x
-G_{\mathrm{im}}^T(M)
\right)
=-\frac7{12}.
\]
Consequently, for every sufficiently small nonzero $\e$,
\begin{equation}\label{eq:GimT-violation}
\fint_{\T^6}G_{\mathrm{im}}^T(M+\e\D\varphi_{1/12})\d x
<G_{\mathrm{im}}^T(M)=4.
\end{equation}
This proves the failure of quasiconvexity of $G_{\mathrm{im}}^T$ and
completes the proof.
\end{proof}

\begin{remark}\label{rem:intuition}
We make here some remarks about the construction and, in particular, the key choice in \eqref{eq:choicepsi}.
First, from \eqref{eq:GimT-as-Q}, it is easy to see that 
$$G_{\mathrm{im}}^T(A)^2=2\,\lvert \operatorname{sym}O(A)\rvert^2-\operatorname{tr}O(A)^2, \qquad O(A):=u\otimes t-s\otimes v,$$
where $\operatorname{sym} O(A)$ is the symmetric part of $O(A)$.
Since \(O\) is quadratic, we can write
$$O(M+\e\D\varphi)=O_0+\e O_1+\e^2O_2,\qquad O_0:=O(M),$$
so that $G_{\mathrm{im}}^T(M+\e\D\varphi)^2$ contains terms up to order
$\e^4$. The construction can be understood as eliminating
these terms successively.
First, we note that taking 
$$O_1=O'(M)[\D\varphi]=-2
\begin{bmatrix}
0&\varphi^2_{z_1}&\varphi^2_{z_2}\\
\varphi^1_{y_1}&0&0\\
\varphi^1_{y_2}&0&0
\end{bmatrix},
$$
to be skew-symmetric cancels its contribution to $G_{\mathrm{im}}^T(M+\e\D\varphi)^2$ in a pointwise sense. 
This condition is automatically satisfied by taking
$$\varphi=(\psi_{z_1},-\psi_{y_1})$$
and imposing \eqref{eq:keyid}, and allows to neglect the term $O_1$ altogether in our analysis. This leads to an expansion in even powers of $\epsilon$ of $G_{\mathrm{im}}^T(M+\e\D\varphi)^2$ and, in turn, of $G_{\mathrm{im}}^T(M+\e\D\varphi)$ as in \eqref{eq:GimT-exact-density} and \eqref{eq:expG} respectively.
Proceeding with the $\e^2$ order in the perturbative expansion \eqref{eq:expG}, we note that our choice of $\varphi$ above also cancels (upon integration) the coefficient $S_\tau$ by making it a null Lagrangian, see \eqref{eq:nullLag}.

We now consider the remaining $\e^4$ term $E_\tau$ defined in \eqref{eq:defE}. For a Fourier mode $e^{i  k\cdot x}$, where again $x=(y_1,y_2,z_1,z_2),$ if we write $k=(k'_1,k''_1,k'_2,k''_2)$ with $k',k''\in \Z^2$, the condition \eqref{eq:keyid} enabling the cancellation of the lower order terms is exactly
$$k\in \mathcal C :=\{(k',k'')\in \Z^4: \det(k',k'')=0\}.$$
Given a frequency $m\in \Z^2$, there is a particularly simple choice of frequencies, lying in $\mc C$, and which satisfy an identity similar to the one in \v Sver\'ak's map $\varphi_{\v S}$, namely
$$(m,-m),(m,m), (2m,0)\in \mc C, \qquad (m,-m) + (m,m)=(2m,0).$$
This choice corresponds to lifting a scalar profile $g\colon \T^2 \to \R$ to a profile on $\T^4$, through
$$\psi(y_1,y_2,z_1,z_2)=2 g(y_1-y_2,z_1-z_2) +2 g(y_1+y_2,z_1+z_2)- g(2y_1,2z_1)$$
where the numerical coefficients in from of each wave are so that, upon differentiation, all waves have the same amplitude. 

Following this reasoning, and starting with the map $\varphi_{\v S}$ from Lemma \ref{lemma:grab}, we would obtain the functions $g_0,\psi_0$ in \eqref{eq:choicepsi}, the latter having  9 Fourier modes. However, as seen from \eqref{eq:GimT-averages}-\eqref{eq:expG}, the choice $\tau=0$ leads to a map $\varphi_0$ which is in the kernel of the 4th-order variation, hence it may not (and in fact will not) falsify the quasiconvexity inequality. Our construction \eqref{eq:choicepsi} perturbs this choice of $g_0$ in a nondegenerate direction of the fourth order term $E_\tau$ (adding 3 more modes, for a total of 12, to the perturbation) while respecting the constraints canceling the lower terms in the expansion, leading to a negative overall variation of the integral.
\end{remark}

\begin{proof}[Proof of Corollary \ref{thm:F24}]
Define the linear embedding 
\[ \iota:\R^{2\times4}\to\R^{2\times6}, \qquad \iota \begin{bmatrix} a_1&a_2&a_3&a_4\\ b_1&b_2&b_3&b_4 \end{bmatrix} := \begin{bmatrix} 0&a_1&a_2&0&a_3&a_4\\ 0&b_1&b_2&0&b_3&b_4 \end{bmatrix}, \] 
and let $M$ be as in the previous proof. Set \[ F_{2,4}(A):=G_{\rm im}^T\bigl(M+\iota(A)\bigr), \qquad A\in\R^{2\times4}. \] Then $F_{2,4}$ is rank-one convex but not quasiconvex at $0$. Indeed, $\iota$ preserves rank-one directions: if $A=a\otimes\xi\in\R^{2\times4}$ has rank one, then $\iota(A) = a\otimes(0,\xi_1,\xi_2,0,\xi_3,\xi_4),$ so rank-one convexity follows from that of $G_{\rm im}^T$. For the failure of quasiconvexity we let $$\widetilde \varphi_{1/12}(y_1,y_2,z_1,z_2):=\varphi_{1/12}(y_0,y_1,y_2,z_0,z_1,z_2)$$ be the restriction of the map from the previous proof to $\T^4$, so that \[ D\varphi_{1/12}=\iota(D\widetilde\varphi_{1/12}). \] Hence, by \eqref{eq:GimT-violation}, for every sufficiently small nonzero $\varepsilon$, \[ \begin{aligned} \fint_{\T^4} F_{2,4}\bigl(\varepsilon D\widehat\varphi_{1/12}\bigr)\,dx = \fint_{\T^6} G_{\rm im}^T\bigl(M+\varepsilon D\varphi_{1/12}\bigr)\,dx < G_{\rm im}^T(M) = F_{2,4}(0). \end{aligned} \] Thus $F_{2,4}$ is not quasiconvex at $0$.
\end{proof}

\begin{proof}[Proof of Corollary \ref{cor:scalar}] Set 
\[
H(A):=F_{2,4}(TA), \qquad  T:= \begin{pmatrix} 0&0&1&0\\ -1&0&0&0 \end{pmatrix}. 
\] 
For every $a\in\R^4$, $T(a\otimes a)=(Ta)\otimes a,$ which has rank at most one. Hence, by the rank-one convexity of $F_{2,4}$, the function $ t\to  H(A+t\,a\otimes a) = F_{2,4}(TA+t(Ta)\otimes a)$ is convex. Now let $\psi_{1/12}$ be the scalar function constructed in the proof of Theorem~1.2. Thus 
\[ 
\varphi_{1/12} = \bigl(\partial_{z_1}\psi_{1/12}, -\partial_{y_1}\psi_{1/12}\bigr) = T\nabla\psi_{1/12}, 
\quad \implies \quad 
D\varphi_{1/12}=TD^2\psi_{1/12}.
\] 
Therefore, by the proof of Corollary \ref{thm:F24}, for every sufficiently small nonzero $\varepsilon$, 
\[ \fint_{\T^4}  H\left(\varepsilon \D^2\psi_{1/12}\right)\,dx = \fint_{\T^4} F_{2,4}\left(\varepsilon \D\varphi_{1/12}\right)\,dx < F_{2,4}(0) = H(0). \] 
The claim follows.
\end{proof}

\section{A symmetric \texorpdfstring{$3\times3$}{3 x 3} example}

\begin{proof}[Proof of Theorem \ref{thm:sym3}]
Let
\[
\xi_1=e_1,\qquad \xi_2=e_2,\qquad \xi_3=e_3,\qquad
\xi_4=e_1+e_2+e_3,
\]
and set
\[
A_j:=\xi_j\otimes\xi_j,
\qquad
L:=\operatorname{span}\{A_1,A_2,A_3,A_4\}.
\]
The matrices $A_1,\ldots,A_4$ are linearly independent. Thus every
$X\in L$ has a unique representation
\[
X=X(r):=\sum_{j=1}^4r_jA_j
=
\begin{pmatrix}
r_1+r_4&r_4&r_4\\
r_4&r_2+r_4&r_4\\
r_4&r_4&r_3+r_4
\end{pmatrix}.
\]
We claim that
\begin{equation}\label{eq:sym3-rank-one-L}
L\cap\{X\in\R^{3\times3}_{\rm sym}:\rank X\leq1\}
=\bigcup_{j=1}^4\R A_j.
\end{equation}
Indeed, if $\rank X(r)\leq1$, three of its non-principal minors give
\[
r_1r_4=r_2r_4=r_3r_4=0.
\]
If $r_4\neq0$, then $r_1=r_2=r_3=0$. If $r_4=0$, the matrix $X(r)$
is diagonal, and at most one of $r_1,r_2,r_3$ is nonzero. This proves
\eqref{eq:sym3-rank-one-L}.

Define $f\colon L\to\R$ by
\[
f(X(r)):=-r_1r_2r_3r_4.
\]
By \eqref{eq:sym3-rank-one-L}, every symmetric rank-one direction in
$L$ is a multiple of some $A_j$. Since $f$ is affine in each
coordinate, it is convex along every such direction.

Now set
\[
\Psi(x_1,x_2,x_3)
:=\cos x_1+\cos x_2+\cos x_3+\cos(x_1+x_2+x_3).
\]
Then
\[
\D^2\Psi
=-\cos x_1\,A_1-\cos x_2\,A_2-\cos x_3\,A_3
-\cos(x_1+x_2+x_3)\,A_4,
\]
so $\D^2\Psi(x)\in L$ for every $x\in\T^3$. Writing
\[
r_1=-\cos x_1,\quad r_2=-\cos x_2,\quad r_3=-\cos x_3,
\quad r_4=-\cos(x_1+x_2+x_3),
\]
Fourier orthogonality gives
\[
\fint_{\T^3}r_1r_2r_3r_4\,\d x=\frac18, \qquad 
\fint_{\T^3}f(\D^2\Psi)\,\d x=-\frac18<0=f(0).
\]

The extension from $L$ to $\R^{3\times3}_{\rm sym}$ follows exactly as
in \cite{Sverak1992a}, using the perturbed functional
\[
F_{\alpha,\lambda,\mu}(Z)
:=f(PZ)+\alpha|PZ|^4
+\lambda|PZ|^2|QZ|^2+\mu|QZ|^4.
\]
Here $P$ is the orthogonal projection onto $L$, $Q:=I-P$, and
$\alpha,\lambda,\mu>0$ are chosen as in \cite{Sverak1992a}.
This concludes the proof.
\end{proof}

{\small
\bibliographystyle{abbrv-andre}
\bibliography{library}
}
\vspace{2em}
\noindent
{\small
\textsc{Andrea Agazzi} \\
Department of Mathematics and Statistics, University of Bern \\
Alpeneggstrasse 22, 3012 Bern, CH \\
\texttt{andrea.agazzi@unibe.ch}

\vspace{1em}
\noindent
\textsc{Giuseppe Bruno} \\
Department of Mathematics and Statistics, University of Bern \\
Alpeneggstrasse 22, 3012 Bern, CH \\
\textit{and} \\
Google DeepMind \\
\texttt{giuseppe.bruno@unibe.ch}

\vspace{1em}
\noindent
\textsc{Andr\'e Guerra} \\
Department of Pure Mathematics and Mathematical Statistics, University of Cambridge \\
Wilberforce Rd, Cambridge CB3 0WB, UK \\
\texttt{adblg2@cam.ac.uk}

\vspace{1em}
\noindent
\textsc{Federico Pasqualotto} \\
Department of Mathematics, University of California San Diego \\
La Jolla, CA 92093, USA \\
\textit{and} \\
Google DeepMind \\
\texttt{fpasqualotto@ucsd.edu}
}

\end{document}